\newcommand{\N}{\mathbb{N}}
\newcommand{\M}{\mathrm{M}}
\newcommand{\GL}{\mathrm{GL}}
\newcommand{\F}{\mathbb{F}}
\newcommand{\magma}{Magma}
\newcommand{\andsoon}{\rule{0.6cm}{0.3pt}\ \!\raisebox{-0.2ex}{\tiny{//}}\rule{0.6cm}{0.3pt}}
\newtheorem{proposition}{Proposition}
\newtheorem{corollary}{Corollary}
\newtheorem{theorem}{Theorem}
\author{Ivan Yudin
\thanks{The work is supported by the FCT Grant SFRH/BPD/31788/2006. The
financial support by CMUC and FCT gratefully acknowledged.}
}
\title{Presentation for parabolic subgroups of $\GL_n(\F_2)$. } 
\begin{document}
\maketitle
\section{Introduction}

Let $G$ be a finite group and $X=\{x_1, x_2
,\dots, x_n\}$ be the set of generators of $G$. The set $X$ is called \emph{a local generating system of depth $k$}, if 
$$
x_ix_j = x_jx_i,\ |i-j|\ge 2.
$$
Local generating systems play important role in representation theory (see \cite{okunkov}) and in classification of finite simple groups (see \cite{phan}). 

A big class of locally presented groups is provided by Coxeter systems (see \cite{tits}). 
We define two-dimensional Coxeter systems as generalizations of the usual
Coxeter systems. For a set $X$ we denote by ${X \choose k}$ the set of
$k$-subsets of $X$. A \emph{two-dimensional Coxeter system} is a triple
$(X,f,g)$, where $f\colon {X \choose 2} \to \N$ and $g\colon {X\choose 3} \to N$
are arbitrary functions. We will consider $f$ as a function on $X^2$ invariant
under swapping the arguments and $g$ as a function on $X^3$ invariant under the
action of $\Sigma_3$. To every two-dimensional Coxeter system $(X,f,g)$ we
associate a finitely presented group $\left\langle X,f,g\right \rangle$ with the
set of generators $X$ and defining relations 
$$
x^2=e,\ (xy)^{f(x,y)} = e, \ (xyz)^{g(x,y,z)} = e,\qquad x,y,z \in X.
$$
Observe that the relations $x_1^2=x_2^2=x_3^2=3$ and $(x_1x_2x_3)^k=e$ imply that
$(x_{\sigma(1)}x_{\sigma(2)}x_{\sigma(3)})^k=e$ for any $\sigma\in \Sigma_3$,
and therefore there is no ambiguity in the description of $\left\langle
X,f,g\right\rangle$. 

Recall that a stabilizer of any flag $0=V_0\subset V_1\subset V_2\subset
\dots\subset V_k=\F_2^n $ in $\GL_n(\F_2)$ is called a \emph{parabolic subgroup}
of $\GL_n(\F_2)$. The  main result of the paper is that every parabolic
subgroup of $\GL_n(F_2)$ has a presentation described by a two-dimensional Coxeter
system, moreover, this presentation is local of depth $3$. In fact, we shall
prove this result for a large class of subgroups of $\GL_n(\F_2)$, which includes
among others parabolic subgroups. It should be noted that in the case of the trivial
flag $0\subset \F_2^n$ we will recover the well-known Steinberg-Curtis presentation of
$\GL_n(\F_2)$. In the case of a maximal flag the corresponding parabolic group is
isomorphic to the group of unipotent matrices over $\F_2$ and our presentation
coincides with the one obtained in~\cite{biss}.

During the preparation of the paper the computations were performed with
computer algebra system \magma\cite{magma}. 
\section{Two-dimensional Coxeter systems}
Every Coxeter system $(X,S)$ corresponds to a complete graph with integer labels on
edges, which is called a Coxeter diagram. The vertices of the Coxeter diagram are the
elements of $X$ and the label of the edge $(xy)$ is $f(x,y)$. Similarly, we
associate to every two-dimensional Coxeter system $(X,f,g)$ a complete
two-dimensional simplicial complex $C(X,f,g)$ with labeled edges and facets. We
shall refer to $C(X,f,g)$ as \emph{a Coxeter diagram} of $(X,f,g)$. The vertices
of $C(X,f,g)$ are the elements of $X$, the edge $(xy)$ will have the label $f(x,y)$ and
the facet $(xyz)$ will have the label $g(x,y,z)$.

To
make these diagrams human comprehensible we shall use the following conventions:
\begin{tabbing}
$\xymatrix{\bullet \!\!\! & \!\!\! \bullet}$ or $\xymatrix{\bullet\!\!\! \ar@{.}[r] &
\!\!\!\bullet}$ \quad\quad\quad \= stands for \quad\quad\quad \= 
$
\xymatrix{ \bullet\!\!\! \ar@{-}[r]^2 & \!\!\!\bullet}
$
\\
\quad \quad $\xymatrix{\bullet \!\!\!  \ar@{-}[r]&\!\!\!   \bullet}$ \>\andsoon 
 \>$
\xymatrix{ \bullet\!\!\! \ar@{-}[r]^3 & \!\!\!\bullet};
$
\\
\quad\quad$\xymatrix{\bullet \!\!\!  \ar@{=}[r]&\!\!\!  \bullet}$ \>
\andsoon
 \>$
\xymatrix{ \bullet\!\!\! \ar@{-}[r]^4 & \!\!\!\bullet}
$
\\
\quad\quad$
\raisebox{-0.25cm}{\xy<1cm,0cm>:
(0,0)*=0{\bullet} = "0";
(1,0)*=0{\bullet} = "1" **@{.};
(0.5,0.756)*=0{\bullet} = "3";"1" **@{-}?(0.4)*++!L{l};
"0";"3" **@{-}?(0.6)*+!R{k}
\endxy}\ \ 
$
 \>\andsoon
 \>$
\ \ 
\raisebox{-0.25cm}{\xy<1cm,0cm>:
(0,0)*=0{\bullet} = "0";
(1,0)*=0{\bullet} = "1" **@{-}?*+!U{2};
(0.5,0.756)*=0{\bullet} = "3";"1" **@{-}?(0.4)*++!L{l};
"0";"3" **@{-}?(0.6)*+!R{k};
(0.5,0.328)*=0{3}
\endxy}
$
\\
\quad\quad$
\raisebox{-0.25cm}{\xy<1cm,0cm>:
(0,0)*=0{\bullet} = "0";
(1,0)*=0{\bullet} = "1" **@{};
(0.5,0.756)*=0{\bullet} = "3";"1" **@{-}?(0.4)*++!L{l};
"0";"3" **@{-}?(0.6)*+!R{k}
\endxy}\ \ 
$
 \>stands for
 \>$
\ \ 
\raisebox{-0.25cm}{\xy<1cm,0cm>:
(0,0)*=0{\bullet} = "0";
(1,0)*=0{\bullet} = "1" **@{-}?*+!U{2};
(0.5,0.756)*=0{\bullet} = "3";"1" **@{-}?(0.4)*++!L{l};
"0";"3" **@{-}?(0.6)*+!R{k};
(0.5,0.328)*=0{0}
\endxy}
$.
\end{tabbing}

For every $n$, we define a two-dimensional Coxeter system $A_{2,n}$ with the set
$X=\left\{ x_1,\dots,x_{n},y_1,\dots,y_n \right\}$. The function $f$ is defined as follows
$$
f(x_i,x_{i+1}) = f(y_i,y_{i+1}) = 4,\ 1\le i \le n-1;
$$
$$
f(x_i,y_i) =3,\ 1 \le i \le n,
$$
and $f(z,t)$ is $2$ for all other pairs of elements in $X$. The function
$g$ is given by
$$
g(x_i,x_{i+1},y_i) = g(x_i,x_{i+1},y_{i+1}) = g(x_i,y_i,y_{i+1}) =
g(x_{i+1},y_i,y_{i+1}) =3, \ 1\le i \le n-1;
$$
$$
g(x_i,x_{i+1},x_{i+2}) = g(y_i,y_{i+1},y_{i+2}) =4,\ 1\le i\le n-2
$$
and $g(z,t,v)=0$ for any other triple of elements in $X$.  
The corresponding diagrams for small values of $n$ have the form:
$$
\xy<2cm,0cm>:
(0,1)*+{A_{2,1}};
(0,0.5)*=0{\bullet};
(0,0)*=0{\bullet} **@{-}
\endxy
\ \ \ \ 
\xy<2cm,0cm>:
(0.25,1)*+{A_{2,2}};
(0,0.5)*=0{\bullet}="u1";
(0,0)*=0{\bullet}="l1";
(0.5,0.5)*=0{\bullet}="u2";
(0.5,0)*=0{\bullet}="l2";
"u1";"l1"**@{-};
"u2";"l2"**@{-};
"u1";"u2"**@{=};
"l1";"l2"**@{=};
"u1";"l2"**@{.};
"u2";"l1"**@{.}
\endxy
\ \ \ \ 
\xy<2cm,0cm>:
(0.5,1)*+{A_{2,3}};
(0,0.5)*=0{\bullet}="u1";
(0,0)*=0{\bullet}="l1";
(0.5,0.5)*=0{\bullet}="u2";
(0.5,0)*=0{\bullet}="l2";
(1,0.5)*=0{\bullet}="u3";
(1,0)*=0{\bullet}="l3";
"u1";"l1"**@{-};
"u3";"l3"**@{-};
"u2";"l2"**@{-};
"l1";"l2"**@{=};
"u1";"u2"**@{=};
"u1";"l2"**@{.};
"u2";"u3"**@{=};
"l2";"l3"**@{=};
"u2";"l1"**@{.};
"u2";"l3"**@{.};
"u3";"l2"**@{.};
"u1";"u3"**\crv{~*=<5pt>{.} (0.5,1.05)};
"l1";"l3"**\crv{~*=<5pt>{.} (0.5,-0.55)};
(0.5,0.65)*+{4};
(0.5,-0.15)*+{4}
\endxy
\ \ \ \ 
\xy<2cm,0cm>:
(0.75,1)*+{A_{2,4}};
(0,0.5)*=0{\bullet}="u1";
(0,0)*=0{\bullet}="l1";
(0.5,0.5)*=0{\bullet}="u2";
(0.5,0)*=0{\bullet}="l2";
(1,0.5)*=0{\bullet}="u3";
(1,0)*=0{\bullet}="l3";
(1.5,0.5)*=0{\bullet}="u4";
(1.5,0)*=0{\bullet}="l4";
"u1";"l1"**@{-};
"u4";"l4"**@{-};
"u3";"l3"**@{-};
"u2";"l2"**@{-};
"l1";"l2"**@{=};
"u1";"u2"**@{=};
"u3";"u4"**@{=};
"l3";"l4"**@{=};
"u1";"l2"**@{.};
"u3";"l4"**@{.};
"u4";"l3"**@{.};
"u2";"u3"**@{=};
"l2";"l3"**@{=};
"u2";"l1"**@{.};
"u2";"l3"**@{.};
"u3";"l2"**@{.};
"u1";"u3"**\crv{~*=<5pt>{.} (0.5,1.05)};
"l1";"l3"**\crv{~*=<5pt>{.} (0.5,-0.55)};
(0.4,0.65)*+{4};
(1.1,0.65)*+{4};
(0.4,-0.15)*+{4};
(1.1,-0.15)*+{4};
"u2";"u4"**\crv{~*=<5pt>{.} (1,1.05)};
"l2";"l4"**\crv{~*=<5pt>{.} (1,-0.55)};
\endxy.
$$
For a subset $S$ of $X$, we denote by $A_{2,n}(S)$ the two-dimensional Coxeter
system obtained from $A_{2,n}$ by restriction of $f$ and $g$ on $S$. We will
identify the groups  $A_{2,n}(S)$ with certain subgroups of $\GL_{n+1}(\F_2)$.  

\section{Parabolic subgroups and their intersections}
Let $\lambda$ be a decomposition of $n$. We denote by $P_{\lambda}$ the standard
parabolic subgroup of $\GL_n(\F_2)$ corresponding to $\lambda$. The group
$P_\lambda$ has the form:
$$
\left( 
\begin{array}{cccc}
	GL_{\lambda_1}(\F_2) & \M_{\lambda_1,\lambda_2}(\F_2) & \dots &
	\M_{\lambda_1,\lambda_l}(\F_2) \\
	0 & \GL_{\lambda_2}(\F_2) & \dots & \M_{\lambda_2,\lambda_l}(\F_2) \\
	\vdots & \vdots & \ddots & \vdots \\
	0 & 0 & \dots & \GL_{\lambda_l}(\F_2)
\end{array}
\right),
$$
where $l$ is the length of $\lambda$. We denote by $P_{\lambda|\mu}$ the
intersection $P_\lambda\cap P_\mu^t$. Observe that $P_{\lambda|\lambda}$ is the
standard
Levi subgroup of $\GL_n(\F_2)$ corresponding to $\lambda$. 
Note that these subgroups (over an arbitrary field) have appeared in the
PhD thesis~\cite{woodcock} of D.~Woodcock. The groups $P_{\lambda|\mu}$ are the
main subject of this paper. 

Let us describe recursive formulas for the orders of the groups $P_{\lambda|\mu}$. 
We shall always assume that $\lambda$ has $l$ non-zero parts, and $\mu$ has
$m$ non-zero parts. 
\begin{proposition}
	\label{order}
	Suppose that $\mu_m\le \lambda_l$. Then
	$$
	\left| P_{\lambda|\mu} \right| = 
	 2^{\mu_m(\lambda_l-\mu_m)} \left|
	\GL_{\mu_m}(\F_2)\right|\cdot \left| P_{\widetilde{\lambda}|\widetilde{\mu}}
	\right|,
	$$
	where $\widetilde{\lambda} =
	(\lambda_1,\lambda_2,\dots,\lambda_l-\mu_m)$ and $\widetilde{\mu} =
	(\mu_1,\mu_2,\dots,\mu_{m-1})$. 
\end{proposition}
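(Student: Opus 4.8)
The plan is to realize $P_{\lambda|\mu}=P_\lambda\cap P_\mu^t$ concretely as the group of invertible $n\times n$ matrices over $\F_2$ whose nonzero entries are constrained by two simultaneous triangularity conditions relative to the consecutive partitions of $\{1,\dots,n\}$ coming from $\lambda$ and $\mu$. Write $I_1,\dots,I_l$ for the $\lambda$-blocks (of sizes $\lambda_1,\dots,\lambda_l$) and $J_1,\dots,J_m$ for the $\mu$-blocks. Then $g\in P_{\lambda|\mu}$ if and only if $g_{ab}=0$ whenever the $\lambda$-block of row $a$ is strictly later than that of column $b$ (block upper triangularity from $P_\lambda$), and simultaneously $g_{ab}=0$ whenever the $\mu$-block of column $b$ is strictly later than that of row $a$ (block lower triangularity from $P_\mu^t$). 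Both partitions cover the same index set and both final blocks $I_l,J_m$ end at position $n$; the hypothesis $\mu_m\le\lambda_l$ gives the containment $J_m\subseteq I_l$, and this is precisely what makes the recursion clean.

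First I would split the coordinates as the complement of $J_m$ followed by $J_m$ itself, and write every $g\in P_{\lambda|\mu}$ in the block form $\left(\begin{smallmatrix} P & Q\\ R & C\end{smallmatrix}\right)$, where $C$ is the bottom-right $\mu_m\times\mu_m$ corner. Applying the two triangularity conditions to the columns lying in $J_m$ forces $Q=0$: since $J_m$ is the last $\mu$-block, block lower triangularity kills every entry in these columns outside the rows $J_m$. Applying them to the rows lying in $J_m$ forces all entries of $R$ outside the columns $I_l\setminus J_m$ to vanish: since $J_m\subseteq I_l$ is the last $\lambda$-block, block upper triangularity confines these rows to the columns of $I_l$. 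Thus $g$ is coarsely block lower triangular, so it is invertible exactly when $P$ and $C$ are invertible; here $C$ ranges freely over $\GL_{\mu_m}(\F_2)$, and the surviving part of $R$ ranges freely over $\M_{\mu_m,\lambda_l-\mu_m}(\F_2)$, contributing the factors $\left|\GL_{\mu_m}(\F_2)\right|$ and $2^{\mu_m(\lambda_l-\mu_m)}$.

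Then I would identify the remaining block $P$, an invertible matrix on the first $n-\mu_m$ coordinates. On this index set the $\lambda$-blocks become $I_1,\dots,I_{l-1},I_l\setminus J_m$, which is exactly the decomposition $\widetilde\lambda$, while the $\mu$-blocks become $J_1,\dots,J_{m-1}$, which is $\widetilde\mu$; and the two triangularity conditions restricted to these coordinates are precisely the defining conditions for $P_{\widetilde\lambda|\widetilde\mu}$. Since the support conditions decouple completely across the $P$-, $R$-, and $C$-regions (with the $Q$-region forced to zero), the assignment $g\mapsto(P,R,C)$ is a bijection from $P_{\lambda|\mu}$ onto $P_{\widetilde\lambda|\widetilde\mu}\times\M_{\mu_m,\lambda_l-\mu_m}(\F_2)\times\GL_{\mu_m}(\F_2)$, and multiplying the three cardinalities yields the stated formula.

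The step requiring the most care is verifying that the constraints genuinely decouple: that there is no hidden coupling between the entries of $P$ and the free block $R$, and that invertibility of $g$ reduces exactly to invertibility of $P$ and $C$. Everything hinges on the containment $J_m\subseteq I_l$, guaranteed by $\mu_m\le\lambda_l$. One should also check the degenerate case $\lambda_l=\mu_m$, where $I_l\setminus J_m$ is empty, the block $R$ disappears, the factor $2^{\mu_m(\lambda_l-\mu_m)}$ equals $1$, and $\widetilde\lambda$ simply acquires a trailing zero part.
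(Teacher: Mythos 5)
Your proof is correct and follows essentially the same route as the paper: splitting off the last $\mu$-block of coordinates, observing that the two triangularity conditions force the top-right block to vanish and confine the bottom-left block to the columns of $I_l\setminus J_m$, and counting via the resulting bijection with $P_{\widetilde{\lambda}|\widetilde{\mu}}\times \GL_{\mu_m}(\F_2)\times \M_{\mu_m,\lambda_l-\mu_m}(\F_2)$. You merely spell out more explicitly than the paper why the constraints decouple and why invertibility reduces to that of the diagonal blocks.
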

\begin{proof}
	Every element of $P_{\lambda|\mu}$ can be written as a block matrix
	$$
	\left(\begin{array}{cc} A & 0 \\  C & B \end{array}\right),
	$$
	where $A\in P_{\widetilde{\lambda}| \widetilde{\mu}}$, $B \in
	\GL_{\mu_m}(\F_2)$ and $C\in \M_{\mu_m,n-\mu_m}(\F_2)$. Moreover, only
	the last $\lambda_l-\mu_m$ columns of $C$ are non-zero. On the other
	hand, for any given $A\in P_{\widetilde{\lambda}| \widetilde{\mu}}$, $B \in
	\GL_{\mu_m}(\F_2)$, $C\in \M_{\mu_m,n-\mu_m}(\F_2)$ such that only the
	last $\lambda_l-\mu_m$ columns of $C$ are non-zero, the matrix
	$$
	\left(\begin{array}{cc} A & 0 \\  C & B \end{array}\right)
	$$
	is invertible and thus an element of $P_{\lambda|\mu}$. Therefore there
	is a one-to-one correspondence between  the elements of $P_{\lambda|\mu}$ and
	 the elements of the Cartesian product 
	$$
	P_{\widetilde{\lambda}|\widetilde{\mu}} \times \GL_{\mu_m}(\F_2) \times
	\M_{\mu_m, \lambda_l-\mu_m} (\F_2).
	$$
\end{proof}
Suppose $\mu_m\ge 2$. We shall denote by $\mu'$ the decomposition
$$(\mu_1,\dots,\mu_{m-1}, \mu_m-1,1)$$ of $n$. 

\begin{corollary}
	\label{order1}
	Suppose $1< \mu_m< \lambda_l$. Then $\left[ P_{\lambda|\mu} :
	P_{\lambda|\mu'} \right] = 2^{\mu_m} -1$. 
\end{corollary}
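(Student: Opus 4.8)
The plan is to obtain both orders from Proposition~\ref{order} and divide. First I would record that $\mu'=(\mu_1,\dots,\mu_{m-1},\mu_m-1,1)$ is a refinement of $\mu$, so $P_{\mu'}\subseteq P_\mu$ and hence $P_{\lambda|\mu'}=P_\lambda\cap P_{\mu'}^t\subseteq P_\lambda\cap P_\mu^t=P_{\lambda|\mu}$; thus the bracket is a genuine subgroup index and equals $|P_{\lambda|\mu}|/|P_{\lambda|\mu'}|$. Since $1<\mu_m<\lambda_l$, the hypothesis $\mu_m\le\lambda_l$ of Proposition~\ref{order} holds, and it gives
\[
|P_{\lambda|\mu}| = 2^{\mu_m(\lambda_l-\mu_m)}\,|\GL_{\mu_m}(\F_2)|\cdot|P_{\widetilde\lambda|\widetilde\mu}|,
\]
with $\widetilde\lambda=(\lambda_1,\dots,\lambda_l-\mu_m)$ and $\widetilde\mu=(\mu_1,\dots,\mu_{m-1})$.

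For the denominator I would apply Proposition~\ref{order} to $P_{\lambda|\mu'}$. The last part of $\mu'$ equals $1\le\lambda_l$, so the formula applies and contributes the factor $2^{\lambda_l-1}\,|\GL_1(\F_2)|=2^{\lambda_l-1}$ together with $|P_{\alpha|\beta}|$, where $\alpha=(\lambda_1,\dots,\lambda_l-1)$ and $\beta=(\mu_1,\dots,\mu_{m-1},\mu_m-1)$. Applying Proposition~\ref{order} once more to $P_{\alpha|\beta}$, whose relevant last parts satisfy $\mu_m-1\le\lambda_l-1$, removes a factor $2^{(\mu_m-1)(\lambda_l-\mu_m)}\,|\GL_{\mu_m-1}(\F_2)|$ and leaves exactly $|P_{\widetilde\lambda|\widetilde\mu}|$, because $(\lambda_l-1)-(\mu_m-1)=\lambda_l-\mu_m$ and the $\mu$-side collapses to $(\mu_1,\dots,\mu_{m-1})$. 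Hence
\[
|P_{\lambda|\mu'}| = 2^{\lambda_l-1}\,2^{(\mu_m-1)(\lambda_l-\mu_m)}\,|\GL_{\mu_m-1}(\F_2)|\cdot|P_{\widetilde\lambda|\widetilde\mu}|.
\]

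Finally I would divide. The common factor $|P_{\widetilde\lambda|\widetilde\mu}|$ cancels; the powers of two combine into the exponent $\mu_m(\lambda_l-\mu_m)-(\lambda_l-1)-(\mu_m-1)(\lambda_l-\mu_m)=1-\mu_m$; and the surviving quotient of general linear orders is $|\GL_{\mu_m}(\F_2)|/|\GL_{\mu_m-1}(\F_2)|=2^{\mu_m-1}(2^{\mu_m}-1)$, using $|\GL_k(\F_2)|=2^{\binom{k}{2}}\prod_{j=1}^{k}(2^j-1)$. Multiplying, $2^{1-\mu_m}\cdot 2^{\mu_m-1}(2^{\mu_m}-1)=2^{\mu_m}-1$, as claimed.

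I do not expect a genuine obstacle here: the statement is a direct consequence of Proposition~\ref{order}. The only points that need care are bookkeeping ones: checking that the hypothesis $\mu_m\le\lambda_l$ is met at each of the three invocations (all guaranteed by $1<\mu_m<\lambda_l$), verifying that the two successive reductions of $\mu'$ terminate at the very same base group $P_{\widetilde\lambda|\widetilde\mu}$ that arises for $\mu$ so that it cancels cleanly, and recalling the elementary identity for $|\GL_k(\F_2)|/|\GL_{k-1}(\F_2)|$.
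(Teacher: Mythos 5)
Your proof is correct and follows essentially the same route as the paper: apply Proposition~\ref{order} once to $P_{\lambda|\mu}$ and twice to $P_{\lambda|\mu'}$, cancel the common factor $\left|P_{\widetilde{\lambda}|\widetilde{\mu}}\right|$, and use $\left|\GL_{\mu_m}(\F_2)\right|/\left|\GL_{\mu_m-1}(\F_2)\right|=2^{\mu_m-1}(2^{\mu_m}-1)$. (Your bookkeeping in fact silently corrects a small typo in the paper's displayed quotient, where $\GL_{\mu_m}$ appears in the denominator in place of $\GL_{\mu_m-1}$.)
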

\begin{proof}
	Applying Proposition~\ref{order} two times we get
	\begin{align*}
		\left|P_{\lambda|\mu'}\right| & = 2^{1\cdot(\lambda_l-1)}
		\left|\GL_1(\F_2)\right|\cdot \left|
		P_{(\lambda_1,\dots,\lambda_l-1)|(\mu_1,\dots,\mu_m-1)}\right|\\
		&=
		2^{(\lambda_l-1)}
		2^{(\mu_m-1)(\lambda_l-\mu_m)}\left|\GL_{\mu_m-1}(\F_2)\right|\cdot
		\left|P_{\widetilde{\lambda}|\widetilde{\mu}}
	\right|.
\end{align*}
Applying Proposition~\ref{order} one more time and using well-known formulas
for the orders of general linear groups we obtain
\begin{align*}
	\frac{\left|P_{\lambda|\mu}\right| }{\left|P_{\lambda|\mu'}\right|} & =
	\frac{2^{\mu_m(\lambda_l-\mu_m)} \left| \GL_{\mu_m}(\F_2)\right|
	}{2^{(\mu_m-1)(\lambda_l-\mu_m)} 2^{\lambda_l-1}\left|
	\GL_{\mu_m}(\F_2)\right|} \\
	&= \frac{2^{\lambda_l-\mu_m}2^{\mu_m-1} (2^{\mu_m} -1)}{2^{\lambda_l-1}}
	= 2^{\mu_m} -1.
\end{align*}
\end{proof}
\begin{corollary}
	\label{order2}
	Suppose $\lambda_l=1$. Then the index of $P_{\lambda|\mu'}$ in $P_{\lambda,\mu}$ is
	$2^{\mu_m-1}$.
\end{corollary}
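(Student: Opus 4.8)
The plan is to read off the index as the quotient $|P_{\lambda|\mu}|/|P_{\lambda|\mu'}|$, and to compute both orders by reducing them, via Proposition~\ref{order}, to the order of one and the same smaller group. First I would record that the statement tacitly assumes $\mu_m\ge 2$, since otherwise $\mu'$ is not defined; under this assumption $\mu'=(\mu_1,\dots,\mu_{m-1},\mu_m-1,1)$ is a genuine decomposition of $n$ refining $\mu$. Passing from $\mu$ to the refinement $\mu'$ only forces additional entries below the block diagonal to vanish, so $P_{\mu'}\subseteq P_\mu$, hence $P_{\mu'}^t\subseteq P_\mu^t$, and therefore $P_{\lambda|\mu'}=P_\lambda\cap P_{\mu'}^t\subseteq P_\lambda\cap P_\mu^t=P_{\lambda|\mu}$. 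This makes the index meaningful.

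The main obstacle is that Proposition~\ref{order} cannot be applied to $P_{\lambda|\mu}$ directly: it requires $\mu_m\le\lambda_l$, whereas here $\mu_m\ge 2>1=\lambda_l$. I would bypass this with the transpose. Since $g\mapsto g^t$ is an involutive bijection of $\GL_n(\F_2)$ carrying $P_\alpha\cap P_\beta^t$ onto $P_\beta\cap P_\alpha^t$, one gets $|P_{\alpha|\beta}|=|P_{\beta|\alpha}|$ for any two decompositions $\alpha,\beta$. In particular $|P_{\lambda|\mu}|=|P_{\mu|\lambda}|$, and now Proposition~\ref{order} does apply to $P_{\mu|\lambda}$, because the relevant inequality becomes $\lambda_l\le\mu_m$, i.e.\ $1\le\mu_m$.

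Applying Proposition~\ref{order} to $P_{\mu|\lambda}$ (the roles of $\lambda$ and $\mu$ swapped, $\lambda_l=1$) gives $|P_{\mu|\lambda}|=2^{\lambda_l(\mu_m-\lambda_l)}|\GL_{\lambda_l}(\F_2)|\,|P_{\nu|\kappa}|=2^{\mu_m-1}|P_{\nu|\kappa}|$, where $\nu=(\mu_1,\dots,\mu_m-1)$ and $\kappa=(\lambda_1,\dots,\lambda_{l-1})$, using $|\GL_1(\F_2)|=1$. On the other hand, the last part of $\mu'$ equals $1=\lambda_l$, so Proposition~\ref{order} also applies to $P_{\lambda|\mu'}$ and yields $|P_{\lambda|\mu'}|=2^{1\cdot(\lambda_l-1)}|\GL_1(\F_2)|\,|P_{\kappa|\nu}|=|P_{\kappa|\nu}|$, since $\lambda_l-1=0$. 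Invoking the transpose identity once more, $|P_{\nu|\kappa}|=|P_{\kappa|\nu}|$, and so $|P_{\lambda|\mu}|=|P_{\mu|\lambda}|=2^{\mu_m-1}|P_{\kappa|\nu}|=2^{\mu_m-1}|P_{\lambda|\mu'}|$, whence the index is $2^{\mu_m-1}$. The only delicate point is the bookkeeping of the truncations $\widetilde\lambda,\widetilde\mu$ of Proposition~\ref{order}: one must check that the zero part produced by $\lambda_l-\lambda_l=0$ is simply dropped, so that the reduced first decomposition in the $P_{\lambda|\mu'}$ step really is $\kappa$ and the reduced parts match those arising from $P_{\mu|\lambda}$; this is routine and poses no genuine difficulty.
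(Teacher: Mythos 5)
Your proposal is correct and follows essentially the same route as the paper: apply Proposition~\ref{order} with the roles of $\lambda$ and $\mu$ swapped (using $|P_{\alpha|\beta}|=|P_{\beta|\alpha}|$ via transpose) to extract the factor $2^{\mu_m-1}$, then identify the resulting reduced group with $P_{\lambda|\mu'}$ by a second, trivial application of the proposition. You merely make explicit the transpose symmetry and the bookkeeping of the truncated decompositions, which the paper leaves implicit.
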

\begin{proof}
	Since $\lambda_l=1\le \mu_m$ we can apply Proposition~\ref{order} with
	the roles of $\lambda$ and $\mu$ swapped.  We get
	\begin{align*}
		\left|P_{\lambda|\mu}\right| & = 2^{1\cdot (\mu_m-1)}
		\left|\GL_1(\F_2)\right|\cdot 
		\left|P_{(\lambda_1,\cdots,\lambda_{l-1})|(\mu_1,\cdots,\mu_m-1)}\right|
	\\&= 
	2^{\mu_m-1}\left|P_{\lambda|\mu'} \right|.
	\end{align*}
		\end{proof}
\section{Generators and relations}
For every decomposition $\lambda=(\lambda_1,\dots,\lambda_l)$ of $n+1$ we
define the subset of stopovers
$$
\overline{s}(\lambda) = \left\{
\lambda_1,\lambda_1+\lambda_2,\dots,\lambda_1+\lambda_2+\dots+\lambda_{l-1}
\right\}.
$$
As it is always assumed that $\lambda_l\not = 0$ we have $\overline{s}(\lambda)\subset
\{1,\dots,n-1\}$. Now for every pair $(\lambda,\mu)$ of decompositions of
$n+1$ we define a  subset $S_{\lambda|\mu}$ of
$X=\left\{x_1,\dots,x_n,y_1,\dots,y_n  \right\}$ by
$$
S_{\lambda|\mu} = \left\{ x_i \middle| i \not \in \overline{s}(\lambda) \right\}
\cup \left\{ y_j \middle| j \not\in \overline{s}(\mu) \right\}.
$$

\begin{theorem}
	\label{main}
	Let $\lambda$ and $\mu$ be decompositions of $n+1$. The groups
	$P_{\lambda|\mu}$ and $\left\langle A_{2,n}(S_{\lambda|\mu})\right\rangle$ are
	isomorhic.
\end{theorem}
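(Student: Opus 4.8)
The plan is to construct an explicit homomorphism from the presented group onto $P_{\lambda|\mu}$ and then force it to be an isomorphism by a counting argument driven by the recursion of Proposition~\ref{order}. Writing $E_{ij}$ for the matrix unit in $\GL_{n+1}(\F_2)$, I would send $x_i \mapsto I + E_{i+1,i}$ (the lower transvection) and $y_i \mapsto I + E_{i,i+1}$ (the upper transvection). A short block-triangularity check shows that $I+E_{i+1,i}$ lies in $P_\mu^t$ for every $i$ and in $P_\lambda$ exactly when $i \notin \overline{s}(\lambda)$, and symmetrically $I+E_{i,i+1}$ lies in $P_\lambda$ always and in $P_\mu^t$ exactly when $i\notin\overline{s}(\mu)$; hence precisely the generators indexed by $S_{\lambda|\mu}$ land in $P_{\lambda|\mu} = P_\lambda \cap P_\mu^t$. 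To see that the assignment respects the presentation, I would verify each defining relation directly: the relations $x^2=e$ and the $f$-relations of exponent $2$, $3$, $4$ only involve indices differing by at most one, so they reduce to identities among transvections inside a single $\GL_2(\F_2)$ or $\GL_3(\F_2)$ block, while the $g$-relations of degree $3$ and $4$ collapse to computations inside a $\GL_3(\F_2)$ or $\GL_4(\F_2)$ block. This yields a homomorphism $\phi\colon \langle A_{2,n}(S_{\lambda|\mu})\rangle \to P_{\lambda|\mu}$.

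Next I would prove $\phi$ surjective, i.e. that the admissible transvections generate $P_{\lambda|\mu}$. This is a Gaussian-elimination argument compatible with the double block structure: starting from an arbitrary element of $P_\lambda \cap P_\mu^t$, one clears entries by elementary row and column operations, the point being that every permitted operation corresponds to one of the available generators $x_i$ (for $i\notin\overline{s}(\lambda)$) or $y_j$ (for $j\notin\overline{s}(\mu)$), so the reduction never leaves the subgroup generated by $S_{\lambda|\mu}$. When both flags are trivial this recovers the generation of $\GL_{n+1}(\F_2)$ by elementary transvections underlying the Steinberg-Curtis presentation.

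The crux is the reverse inequality $|\langle A_{2,n}(S_{\lambda|\mu})\rangle| \le |P_{\lambda|\mu}|$, which together with surjectivity forces $\phi$ to be an isomorphism. I would obtain it by induction, using Proposition~\ref{order} and Corollaries~\ref{order1} and~\ref{order2} as the target recursion. Passing from $\mu$ to $\mu' = (\mu_1,\dots,\mu_{m-1},\mu_m-1,1)$ adjoins the single stopover $n$ to $\overline{s}(\mu)$, so $A_{2,n}(S_{\lambda|\mu'})$ is obtained from $A_{2,n}(S_{\lambda|\mu})$ by deleting the generator $y_n$; let $H$ denote the subgroup generated by the surviving generators. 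The inductive step is to show $[\,\langle A_{2,n}(S_{\lambda|\mu})\rangle : H\,] \le 2^{\mu_m}-1$, and $\le 2^{\mu_m-1}$ in the situation of Corollary~\ref{order2}. For this I would exhibit an explicit transversal of at most that many words in $y_n$ and its neighbouring generators, and verify by Schreier/Todd--Coxeter rewriting that left multiplication by any generator permutes these cosets; the $f$- and $g$-relations are exactly what is needed to move a generator past a representative while keeping the word length bounded. Interleaving these reductions with the block-stripping of Proposition~\ref{order} (whose $\GL_{\mu_m}(\F_2)$ factor is presented by a sub-system of type $A_2$, i.e. the Steinberg-Curtis case already treated) drives the induction down to trivial base cases and yields the bound.

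The main obstacle is this last order bound, and within it the verification of coset closure: showing that the chosen transversal is genuinely stable under all generators is where the two-dimensional facet relations do their work, since it is precisely the degree-$3$ and degree-$4$ relations among triples that permit rewriting long products back into normal form. A secondary difficulty is bookkeeping, since the recursion alternately modifies $\lambda$ and $\mu$ and may swap their roles, as in Corollary~\ref{order2}; the induction must therefore be organized so that at each stage the deleted generator corresponds exactly to the stopover introduced by the chosen reduction.
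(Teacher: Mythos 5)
Your proposal follows essentially the same route as the paper: the same assignment $x_i\mapsto I+E_{i+1,i}$, $y_i\mapsto I+E_{i,i+1}$ with relation-checking reduced to small blocks, the same block-matrix induction for surjectivity, and the same order bound obtained by exhibiting an explicit transversal of $\left\langle A_{2,n}(S_{\lambda|\mu'})\right\rangle$ in $\left\langle A_{2,n}(S_{\lambda|\mu})\right\rangle$ and verifying closure of the coset set under the generators, matched against the recursion of Corollaries~\ref{order1} and~\ref{order2}. You correctly identify the coset-closure verification as the crux; the paper carries it out by the same rewriting computations you describe (note only that for right cosets $Hw$ the stability to check is under right, not left, multiplication by generators).
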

The rest of this section is devoted to proving Theorem~\ref{main}. The strategy
of the proof is the usual one: for every $\lambda$ and $\mu$ we define a homomorphism
$$\phi_{\lambda,\mu}\colon
\left\langle A_{2,n}(S_{\lambda|\mu}) \right\rangle\to P_{\lambda|\mu},$$ then we
show that $\phi_{\lambda,\mu}$ is surjective and deduce from  the
comparison of orders that $\phi_{\lambda,\mu}$ are isomorphisms. 

Define $\phi\colon \left\{ x_j, y_j \ \middle|\  1\le j\le n \right\}\to
\GL_{n+1}(\F_2)$ by
\begin{align*}
	\phi(x_j) & := I_{j-1}\oplus \left( 
	\begin{array}{cc}
		1 & 0\\ 
		1 & 1
	\end{array}
	\right)\oplus I_{n-j} = I_{n+1} + E_{j+1,j}\\
	\phi(y_j) & := I_{j-1}\oplus \left( 
	\begin{array}{cc}
		1 & 1 \\
		0 & 1
	\end{array}
	\right)\oplus I_{n-j} = I_{n+1} + E_{j,j+1},
\end{align*}
where $E_{i,j}$ denotes the elementary matrix with $1$ at $i$-th row and
$j$-th column.
\begin{proposition}
The map $\phi$ can be extended to a homomorphism of groups $\phi\colon
\left\langle A_{2,n} \right\rangle\mapsto \GL_n(\F_2)$.	
\end{proposition}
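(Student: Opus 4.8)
The plan is to invoke the universal property of the finitely presented group $\left\langle A_{2,n}\right\rangle$: a map defined on generators extends to a group homomorphism exactly when the chosen images satisfy every defining relation. So I would take the matrices $\phi(x_j)=I_{n+1}+E_{j+1,j}$ and $\phi(y_j)=I_{n+1}+E_{j,j+1}$ and verify, relation by relation, that $\left(\phi(z)\right)^2=I_{n+1}$, that $\left(\phi(z)\phi(t)\right)^{f(z,t)}=I_{n+1}$, and that $\left(\phi(z)\phi(t)\phi(v)\right)^{g(z,t,v)}=I_{n+1}$ for all $z,t,v\in X$. Two whole families of relations are essentially free of charge. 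First, over $\F_2$ one has $E_{a,b}^2=0$ for $a\neq b$, whence $\left(I_{n+1}+E_{a,b}\right)^2=I_{n+1}+2E_{a,b}=I_{n+1}$; this settles every involution relation $z^2=e$ at once. Second, whenever $g(z,t,v)=0$ the relation $\left(\phi(z)\phi(t)\phi(v)\right)^{0}=I_{n+1}$ holds vacuously, so only the triples with $g\in\{3,4\}$ need to be examined.

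The observation that makes the remaining checks finite is locality. Each $\phi(x_j)$ and $\phi(y_j)$ differs from the identity only in the $2\times2$ block on rows and columns $j,j+1$, so any relation whose generators have indices lying in a window $\{a,a+1,\dots,b\}$ can be verified inside the corresponding $\GL_{b-a+2}(\F_2)$, with the matrices acting as the identity on the complementary coordinates. Using $E_{a,b}E_{c,d}=\delta_{bc}E_{a,d}$, the relations with $f=2$ reduce to the commutativity of two transvections whose index patterns do not chain, which is immediate for the non-adjacent $x$--$x$ and $y$--$y$ pairs and for the $x_i$--$y_j$ pairs with $i\neq j$. The relation $f(x_i,y_i)=3$ localizes to the matrix $\bigl(\begin{smallmatrix}1&1\\1&0\end{smallmatrix}\bigr)$, the order-$3$ element of $\GL_2(\F_2)\cong\Sigma_3$; the relations $f=4$ on adjacent $x$'s (and, by symmetry, $y$'s) localize to a unipotent $I+N$ in $\GL_3(\F_2)$ with $N^{3}=0$, where $\left(I+N\right)^4=\left(I+N^2\right)^2=I$ in characteristic $2$.

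Finally I would dispatch the order-$3$ triples $g(x_i,x_{i+1},y_i)$, $g(x_i,x_{i+1},y_{i+1})$, $g(x_i,y_i,y_{i+1})$, $g(x_{i+1},y_i,y_{i+1})$ by the same localization, computing in each case the product of the three transvections inside the relevant $3\times3$ block and checking that its cube is $I$; the transpose symmetry $x_j\leftrightarrow y_j$ halves this work. The order-$4$ triples $g(x_i,x_{i+1},x_{i+2})$ and $g(y_i,y_{i+1},y_{i+2})$ sit inside a $4\times4$ block, where the product is again unipotent, $I+N$ with $N^{4}=0$, so the fourth power collapses exactly as above. I expect the main obstacle to be bookkeeping rather than anything conceptual: organizing the $g$-relations so that each is confirmed by a single small matrix computation, and handling the windows and the vanishing of $E_{a,b}E_{c,d}$ uniformly, so that no relation is missed among the many triples with $g=0$.
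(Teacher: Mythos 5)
Your proposal is correct and follows essentially the same route as the paper: reduce to checking the defining relations on the images of the generators, dispose of the commuting and $g=0$ relations directly, and use locality of the transvections to confine the remaining finitely many checks to small blocks, which the paper then delegates to a hand or \magma{} computation. Your explicit unipotent/nilpotent arguments for the $f=4$ and $g=4$ relations and the $\GL_2(\F_2)\cong\Sigma_3$ observation merely make those final checks more self-contained.
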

\begin{proof}
We have to check that all relations imposed by the two-dimensional Coxeter
system $A_{2,n}$ on the generators of $\left\langle A_{2,n} \right\rangle$ hold
for the elements $\phi(x_j)$, $\phi(y_j)$ of $\GL_n(\F_2)$. First, we observe that
if $|j-k|\ge 2$ then $\phi(x_j)$ and $\phi(y_j)$ commute with
$\phi(x_k)$ and $\phi(y_k)$, as required. Now all other non-trivial relations
involve only  the elements $x_j$, $x_{j+1}$, $x_{j+2}$, $y_j$, $y_{j+1}$, $y_{j+2}$  for some
$j$. It is clear from the structure of the matrices $\phi(x_j)$, $\phi(y_j)$ 
that it is enough to check validity of the relations only for the matrices
$\phi(x_1)$, $\phi(x_2)$, $\phi(x_3)$, $\phi(y_1)$, $\phi(y_2)$, $\phi(y_3)$.
This can be done either by direct hand computation or with the help of any
computer system, for example, \magma. 
\end{proof}
Before we proceed, let us remark that if a subgroup $H$ of
$\GL_{n+1}(\F_2)$ contains $I_{n+1}+ E_{i,j}$ and $I_{n+1} + E_{j,k}$ with
$i<j<k$ (or $i>j>k$), then $I+E_{i,k} \in H$. In fact,
\begin{align*}\left( \left( I+E_{i,j} \right)\left( I+E{j,k} \right) \right)^2 & = 
	\left( I+E_{i,j}+E_{j,k} + E_{i,k} \right)^2\\ & =
	I + E_{i,j}+E_{j,k}+E_{i,k} + E_{i,j} + E_{i,k} + E_{j,k} + E_{i,k}\\ & = I
+
E_{i,k}.
\end{align*}
Therefore if $\left\{ x_i, x_{i+1},\dots, x_j \right\} \subset S_{\lambda|\mu}$
then $I_{n+1}+E_{j+1,i}\in \left\langle A_{2,n}\left( S_{\lambda|\mu} \right)
\right\rangle $ and if $\left\{ y_i, y_{i+1},\dots, y_j \right\}\subset
S_{\lambda|\mu}$ then $I_{n+1} + E_{i,j+1} \in \left\langle A_{2,n}\left(
S_{\lambda|\mu}
\right) \right\rangle$.
We define $\phi_{\lambda,\mu}$ to be the restriction of $\phi$ on $\left\langle
A_{2,n}(S_{\lambda|\mu}) \right\rangle$. 
\begin{proposition}
	The image of $\phi_{\lambda,\mu}$ coincides with $P_{\lambda|\mu}$. 
\end{proposition}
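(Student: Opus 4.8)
The statement amounts to surjectivity of $\phi_{\lambda,\mu}$ onto $P_{\lambda|\mu}$ together with the (easier) containment $\operatorname{im}\phi_{\lambda,\mu}\subseteq P_{\lambda|\mu}$. To set up both, for a decomposition $\nu$ of $n+1$ let $b_\nu(i)$ denote the index of the part of $\nu$ containing the position $i\in\{1,\dots,n+1\}$, so that positions $i$ and $i+1$ share a part exactly when $i\notin\overline{s}(\nu)$. Unwinding the shapes of $P_\lambda$ and $P_\mu^t$, one sees that $M\in P_{\lambda|\mu}$ if and only if $M_{i,k}=0$ whenever $b_\lambda(i)>b_\lambda(k)$ or $b_\mu(i)<b_\mu(k)$; that is, the admissible (possibly non-zero) positions $(i,k)$ are exactly those with $b_\lambda(i)\le b_\lambda(k)$ and $b_\mu(i)\ge b_\mu(k)$.

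I would first dispatch $\operatorname{im}\phi_{\lambda,\mu}\subseteq P_{\lambda|\mu}$ on generators. The matrix $\phi(x_i)=I_{n+1}+E_{i+1,i}$ is lower triangular, so it lies in $P_\mu^t$ automatically, and it lies in $P_\lambda$ precisely when $i$ and $i+1$ share a $\lambda$-part, i.e.\ $i\notin\overline{s}(\lambda)$, which is exactly the condition $x_i\in S_{\lambda|\mu}$. Dually, $\phi(y_j)=I_{n+1}+E_{j,j+1}$ is upper triangular, hence in $P_\lambda$, and lies in $P_\mu^t$ iff $j\notin\overline{s}(\mu)$, i.e.\ $y_j\in S_{\lambda|\mu}$. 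Thus each generator maps into $P_{\lambda|\mu}$.

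For the reverse inclusion I would show that $\operatorname{im}\phi_{\lambda,\mu}$ contains the transvection $I_{n+1}+E_{i,k}$ at every admissible off-diagonal position $(i,k)$. If $i<k$ lie in one $\mu$-part then $y_i,\dots,y_{k-1}\in S_{\lambda|\mu}$, and the observation made just before the Proposition gives $I_{n+1}+E_{i,k}\in\operatorname{im}\phi_{\lambda,\mu}$; symmetrically, if $i>k$ lie in one $\lambda$-part the chain $x_k,\dots,x_{i-1}$ produces $I_{n+1}+E_{i,k}$. These two families are exhaustive: an admissible position with $i<k$ forces $b_\mu(i)=b_\mu(k)$ (a common $\mu$-part), and one with $i>k$ forces $b_\lambda(i)=b_\lambda(k)$ (a common $\lambda$-part).

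The crux is then to prove that these transvections generate all of $P_{\lambda|\mu}$. The admissibility relation is reflexive and transitive, and $(i,k)$ together with $(k,i)$ is admissible exactly when $i,k$ lie in a common part of $\lambda$ and of $\mu$; the corresponding equivalence classes are the blocks of the common refinement of the two part-structures. On each such block the image contains all upper and all lower transvections, which generate the full general linear group of that block (over $\F_2$ every invertible matrix is a product of transvections); these give the Levi factors, while the strictly-ordered admissible positions generate the unipotent radical. I would conclude either via the Levi decomposition $P_{\lambda|\mu}=L\ltimes U$, checking that the two transvection families generate $L$ and $U$ respectively, or, more concretely, by Gaussian elimination ordered compatibly with the admissibility preorder: given $M\in P_{\lambda|\mu}$, left- and right-multiplication by admissible transvections clears its off-diagonal entries without ever filling a non-admissible position (by transitivity), reducing $M$ to $I_{n+1}$. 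Either route yields $P_{\lambda|\mu}\subseteq\operatorname{im}\phi_{\lambda,\mu}$, and the main difficulty lies precisely in organizing this elimination so that the zero pattern is preserved throughout.
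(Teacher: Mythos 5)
Your containment direction is the same as the paper's (check generators: $\phi(x_i)$ is automatically in $P_\mu^t$ and lies in $P_\lambda$ iff $i\notin\overline{s}(\lambda)$, and dually for $y_j$), but your surjectivity argument takes a genuinely different route. The paper inducts on $n$: assuming $\mu_m\le\lambda_l$, it writes an element of $P_{\lambda|\mu}$ in the block form of Proposition~\ref{order}, realizes the bottom $\GL_{\mu_m}(\F_2)$ block as a product of the generators indexed by the last $\mu$-part, kills the block $C$ by right multiplication by the $\phi(x_j)$ with $j$ in the last $\lambda$-part, and invokes the induction hypothesis on $A\in P_{\widetilde{\lambda}|\widetilde{\mu}}$. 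You instead characterize $P_{\lambda|\mu}$ globally as the invertible matrices supported on the transitive, reflexive relation of admissible positions, show every admissible transvection lies in the image (correctly observing that admissible upper positions lie within a $\mu$-part and admissible lower positions within a $\lambda$-part, so the chain remark applies), and then generate the whole group by transvections via a preorder-respecting Gaussian elimination. Both arguments are sound and both ultimately rest on the same two facts (the chain remark producing $I+E_{i,k}$, and generation of $\GL_k(\F_2)$ by transvections). What your version buys is a cleaner structural picture — the incidence-algebra/common-refinement description of $P_{\lambda|\mu}$ and an exact identification of which transvections the image contains — at the cost of having to organize the elimination carefully (normalize the diagonal blocks of the common refinement first, then clear the strictly ordered positions in an order compatible with the preorder; your transitivity observation is exactly what keeps the zero pattern intact, so this is routine but should be written out). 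The paper's induction avoids that bookkeeping entirely and meshes with the order recursions of Corollaries~\ref{order1} and~\ref{order2} used later, which is presumably why the author chose it.
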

\begin{proof}
	First we show that $\phi(S_{\lambda|\mu})$ is a subset of
	$P_{\lambda|\mu}$. This will imply that the image of
	$\phi_{\lambda,\mu}$ is a subgroup of $P_{\lambda|\mu}$. Recall that
	$P_{\lambda|\mu} = P_\lambda\cap P_\mu^t$. Since $P_{\mu}^t$ contains
	all lower-triangular matrices and $\phi(x_j)$ are lower-triangular it follows that $\phi(x_j)\in P_{\mu}^t$
	for all $j$. Taking into account the symmetry between $x$'s and
	$y$'s and between $\lambda$ and $\mu$ it is enough to show that
	$\phi(x_j)$ is an element of $P_\lambda$
	for all $x_j \in S_{\lambda|\mu}$. From the picture
	$$
	\xy<4cm,0cm>:
	(0,1);(0,0.7) **@{-};
	(0.3,0.7) **@{-};
	(0.3,0.45) **@{-};
	(0.55,0.45) **@{-};
	(0.55,0.33) **@{-};
	(0.68,0.2) **@{.};
	(0.8,0.2) **@{-};
	(0.8,0) **@{-};
	(1,0) **@{-};
	(0,0.94);(0.55,0.39) **@{-}; 
	(0.74,0.2) **@{.};
	(0.94,0) **@{-};
	(0.27,0.67)*=0{\bullet};
	(0.52,0.42)*=0{\bullet};
	(0.77,0.17)*=0{\bullet};
	(-0.05,0.85)*+{\lambda_1};
	(0.03,0.97)*+{ {}_{1}};
	(0.12,0.65)*+{\lambda_1};
	(0.25,0.55)*+{\lambda_2};
	(0.4,0.4)*+{\lambda_2};
	(0.76,0.08)*+{\lambda_l};
	(0.9,-0.05)*+{\lambda_l};
	(0.97,0.03)*+{ {}_1}; 
	\endxy
	$$
	it follows immediately  that $\phi(x_j) \in P_{\lambda}$ if and only if
	$j \not\in \overline{s}(\lambda)$, which is equivalent to $x_j\in
	S_{\lambda|\mu}$. 

	To prove that $\phi_{\lambda,\mu}$ is surjective on $P_{\lambda|\mu}$  we proceed by induction on
	$n$. For $n=0$ there is nothing to prove. Suppose $n\ge 1$. Without loss
	of generality we can assume that $\mu_m\le \lambda_l$. Then every
	element of $P_{\lambda|\mu}$ can be written as a block matrix
	$$
	\left( 
	\begin{array}{cc}
		A & 0 \\
		C & B
	\end{array}
	\right),
	$$
	where $A\in P_{\widetilde{\lambda}|\widetilde{\mu}}$, $B\in
	\GL_{\mu_m}(\F_2)$, $C\in \M_{\mu_m,n+1-\mu_m}$, and $C$ is such that only the last
	$\lambda_l-\mu_m$ columns of $C$ are non-zero. By the remark above the
	matrix $I_{n+1-\mu_m}\oplus B$ can be written as a product of
	matrices $\phi(x_j)$ and $\phi(y_j)$ with $j\ge \mu_1 + \dots
	+\mu_{m-1}+1 = n-\mu_m+2$, and therefore $I_{n+1-\mu_m}\oplus B$ lies in the
	image of $\phi_{\lambda,\mu}$. 
	Thus it is enough to show that 
	for $A\in P_{\widetilde{\lambda}|\widetilde{\mu}}$ 
	and $C\in \M_{\mu_m,n+1-\mu_m}$ the matrix
	$$
		\left( 
	\begin{array}{cc}
		A & 0 \\
		C & I_{\mu_m}
	\end{array}
	\right)
	$$
	lies in the image of $\phi_{\lambda,\mu}$ if only the first
	$n+1-\lambda_l$
	columns of $C$ are zero.
	Multiplying the
	matrix above by $\phi(x_j)$, $j\ge \lambda_1+\dots+\lambda_{l-1}+1$, from
	the right hand side,
	we can annihilate all elements of $C$. Therefore, it remains to show
	that
$$
		\left( 
	\begin{array}{cc}
		A & 0 \\
		0 & I_{\mu_m}
	\end{array}
	\right)
	$$
	lies in the image of $\phi_{\lambda,\mu}$. This follows from the
	induction hypothesis. 
\end{proof} 
To show that the order of the group $\left\langle A_{2,n}(S_{\lambda|\mu})
\right\rangle$ does not exceed the order of $P_{\lambda|\mu}$ we proceed as
follows. We know from Corollary~\ref{order1} and
Corollary~\ref{order2} that 
\begin{align*}
	\left| P_{\lambda|\mu}\right|& = (2^{\mu_m} -1) \left|
	P_{\lambda|\mu'}\right|, && \mbox{if $1<\mu_m\le \lambda_l$,}\\
	\left| P_{\lambda|\mu} \right| & = 2^{\lambda_m-1} \left|
	P_{\lambda|\mu'}\right|, && \mbox{if $\lambda_l=1$},
\end{align*}
where $\mu'=(\mu_1,\dots,\mu_m-1,1)$.
Moreover, it is clear that 
$$
\left|P_{\lambda|\mu}\right| = \left|
P_{\widetilde{\lambda}|\widetilde{\mu}} \right|,\ \mbox{for $\mu_m=\lambda_l=1$},
$$
where $\widetilde{\lambda}=(\lambda_1,\dots,\lambda_{l-1})$ and
$\widetilde{\mu}=(\mu_1,\dots,\mu_{m-1})$. Taking into account that
$\left|P_{\lambda|\mu}\right|= \left| P_{\mu|\lambda}\right|$ these relations
provide enough information to compute the order of $P_{\lambda|\mu}$ for any
given pair of decompositions $\lambda$ and $\mu$. 

Now it is clear that the groups $\left\langle A_{2,n}(S_{\lambda|\mu})
\right\rangle$ and $\left\langle A_{2,n}(S_{\mu|\lambda}) \right\rangle$ are
isomorphic, and that for $\lambda_l=\mu_m=1$ the groups $\left\langle
A_{2,n}(S_{\lambda|\mu})
\right\rangle$ and $\left\langle
A_{2,n}(S_{\widetilde{\lambda}|\widetilde{\mu}}) \right\rangle$ are isomorphic
as well. To get the required inequality for the orders it now suffices to show
that
\begin{align*}
	\left|\left\langle A_{2,n}(S_{\lambda|\mu}) \right\rangle\right| & \le
	(2^{\mu_m} -1)\left|\left\langle A_{2,n}(S_{\lambda|\mu'})
	\right\rangle\right|
	&& \mbox{if $1<\mu_m\le \lambda_l$,}\\
	\left|\left\langle A_{2,n}(S_{\lambda|\mu}) \right\rangle\right| & \le 
	2^{\lambda_m-1} \left|\left\langle A_{2,n}(S_{\lambda|\mu'})
	\right\rangle \right|  && \mbox{if $\lambda_l=1$}.
\end{align*}
 To prove these relations we shall give a 
description of coset representatives of $\left\langle A_{2,n}(S_{\lambda|\mu'})
\right\rangle$ in $\left\langle A_{2,n}(S_{\lambda|\mu}) \right\rangle$ for
$1<\mu_m\le \lambda_l$ and of $\left\langle A_{2,n}(S_{\lambda,\mu'})
\right\rangle$ in $\left\langle A_{2,n}(S_{\lambda|\mu}) \right\rangle$ for
$\lambda_l=1$.
\begin{proposition}
	Suppose $1<\mu_m\le \lambda_l$. Denote the group $\left\langle
	A_{2,n}(S_{\lambda|\mu'})
	\right\rangle$ by $H$. Then the cosets of $H$ in $\left\langle
	A_{2,n}(S_{\lambda|\mu})	\right\rangle$ are given by the set
	$$
	\left\{ H \right\}\cup \left\{ Hy_nx_n^{\varepsilon_n}\dots
	y_kx_k^{\varepsilon_k}\  \middle|
	\ n-\mu_m+2\le k\le n,\ \varepsilon_j\in \left\{ 0,1 \right\}\right\}.
	$$
	Note that the number of elements in this set is less then or equal to
	$$
	1 + (2 + \dots + 2^{\mu_m-1}) = 2^{\mu_m}-1.
	$$
\end{proposition}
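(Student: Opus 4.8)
The plan is to run a coset enumeration of $H$ inside $G:=\left\langle A_{2,n}(S_{\lambda|\mu})\right\rangle$ and to check that the proposed list of cosets closes up under the right action of $G$. First I would compare the two generating sets. Since $\mu'=(\mu_1,\dots,\mu_{m-1},\mu_m-1,1)$ with $\mu_m\ge2$, the stopovers satisfy $\overline{s}(\mu')=\overline{s}(\mu)\cup\{n\}$ while $\overline{s}(\lambda)$ is unchanged, so $S_{\lambda|\mu}=S_{\lambda|\mu'}\cup\{y_n\}$: the unique new generator is $y_n$, and $G=\langle H,y_n\rangle$. Using $\mu_m\le\lambda_l$ one checks that no stopover of $\lambda$ or of $\mu$ lies in $\{n-\mu_m+2,\dots,n\}$, so that $x_{n-\mu_m+2},\dots,x_n$ and $y_{n-\mu_m+2},\dots,y_{n-1}$ are all generators belonging to $H$; this is exactly the band of indices appearing in the claimed representatives.

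Write $w_{k,\varepsilon}:=y_nx_n^{\varepsilon_n}\cdots y_kx_k^{\varepsilon_k}$, let $T$ be the proposed set $\{e\}\cup\{w_{k,\varepsilon}\}$, and put $U:=\bigcup_{r\in T}Hr$. The left coset space carries a right $G$-action, and $U$ contains the trivial coset $H$; since $S_{\lambda|\mu}$ generates $G$, it is enough to show that $U$ is closed under right multiplication by every generator $s\in S_{\lambda|\mu}$. Closure makes $U$ right-$G$-invariant, and $e\in U$ then forces $U=G$, whence $[G:H]\le|T|\le2^{\mu_m}-1$ (the inequality for $|T|$ allowing for coincidences among the listed cosets). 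Concretely, for each $r\in T$ and each generator $s$ I must produce $r'\in T$ and $h\in H$ with $rs=h\,r'$.

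I would dispatch the routine families first. For $r=e$ one has $es\in H$ when $s\in S_{\lambda|\mu'}$, and $ey_n=w_{n,0}\in T$. For $r=w_{k,\varepsilon}$ and a generator $x_j$ or $y_j$ with $j\le k-2$, the index is at distance at least $2$ from every letter of $w_{k,\varepsilon}$, so $s$ commutes through the entire word and $w_{k,\varepsilon}s=s\,w_{k,\varepsilon}\in Hw_{k,\varepsilon}$. Right multiplication by $x_k$ merely flips the exponent $\varepsilon_k$ via $x_k^2=e$ and stays in $T$; right multiplication by $y_{k-1}$ appends a block $y_{k-1}x_{k-1}^{0}$, giving $w_{k-1,(\varepsilon,0)}$, and the bound $k\ge n-\mu_m+2$ ensures that $y_{k-1}$ is an actual generator precisely when the new index $k-1$ is still admissible.

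The main obstacle is the boundary cases, where commutation fails and a relation of $A_{2,n}$ must intervene: right multiplication by $y_k$ or by $x_{k-1}$ when $\varepsilon_k=1$, and the analogous interactions of adjacent indices near the top of the band. Here I would use the order-$3$ relations in braid form $x_jy_jx_j=y_jx_jy_j$, the order-$4$ relations $(x_jx_{j+1})^4=(y_jy_{j+1})^4=e$, and the facet relations $g(x_j,x_{j+1},y_{j+1})=g(x_{j+1},y_j,y_{j+1})=3$ (together with $g(x_j,x_{j+1},x_{j+2})=g(y_j,y_{j+1},y_{j+2})=4$) to rewrite each such product $w_{k,\varepsilon}s$ in the form $h\,w_{k',\varepsilon'}$ with $h\in H$. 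The delicate part is to verify, case by case, that every product re-expresses \emph{within} $T$: that no representative with index outside $n-\mu_m+2\le k'\le n$ is ever created and that $\varepsilon'$ stays $\{0,1\}$-valued. This is exactly where the shape of the diagram $A_{2,n}$ and the hypothesis $1<\mu_m\le\lambda_l$ are used, and once the closure is confirmed the stated cardinality follows by summing $2^{n-k+1}$ over $n-\mu_m+2\le k\le n$.
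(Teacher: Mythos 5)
Your framework is the right one and is exactly the paper's: identify $S_{\lambda|\mu}=S_{\lambda|\mu'}\cup\{y_n\}$, observe that the indices $n-\mu_m+2,\dots,n$ avoid all stopovers of $\lambda$ and $\mu$, and then show the proposed coset list is closed under right multiplication by every generator, which bounds the index by the cardinality of the list. Your dispatching of the easy cases (generators $x_j,y_j$ with $j\le k-2$ commute through; $x_k$ flips $\varepsilon_k$; $y_{k-1}$ extends the word exactly when $k-1\ge n-\mu_m+2$) is correct and agrees with the paper.

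However, the proof is not complete: the cases you label as ``the main obstacle'' --- right multiplication by $y_j$ for $k\le j\le n$, by $x_{k-1}$, and by $x_j$ for $k+1\le j\le n$ --- are precisely where all the content of the proposition lies, and you only promise to handle them ``case by case'' without exhibiting the rewritings. These are not routine: the paper's argument shows, e.g., that $Hw\cdot y_j$ either flips $\varepsilon_{j-1}$ (when $\varepsilon_j=0$) or fixes the coset (when $\varepsilon_j=1$), that $Hw\cdot x_{k-1}=Hw$ requires an induction on $k$ (descending from $k=n$) whose inductive step feeds back into the $y_k$ computation, and that the $x_j$ case with $\varepsilon_{j-1}=1$ reduces to the already-established $x_{j-1}$ stability via the braid relation $y_{j-1}x_{j-1}y_{j-1}=x_{j-1}y_{j-1}x_{j-1}$ and the facet relations of label $3$. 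Without carrying out these manipulations (or at least the key identities such as $Hy_nx_n\cdot x_{n-1}=Hy_nx_n$ and the general $\varepsilon_j=0$ rewriting), one cannot conclude that the list is closed, so the claimed bound $[G:H]\le 2^{\mu_m}-1$ is not yet established. The plan is sound and matches the paper's strategy, but the substance of the verification is missing.
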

\begin{proof}
	It suffices to show that the given set of cosets is closed under the
	action of generators $z\in S(\lambda,\mu)$.
	Suppose $j\le k-2$, then 
	\begin{align*}
		Hy_nx_n^{\varepsilon_n}\dots
	y_kx_k^{\varepsilon_k} x_j &= Hy_nx_n^{\varepsilon_n}\dots
	y_kx_k^{\varepsilon_k},\\
Hy_nx_n^{\varepsilon_n}\dots
	y_kx_k^{\varepsilon_k} y_j &= Hy_nx_n^{\varepsilon_n}\dots
	y_kx_k^{\varepsilon_k},
	\end{align*}
since $x_j$, $y_j$ commute with $x_s$ and $y_s$ as soon as $|j-s|\ge 2$. 
If $y_{k-1}\in S(\lambda|\mu)$ then $Hy_nx_n^{\varepsilon_n}\dots
y_kx_k^{\varepsilon_k}y_{k-1}$ is the coset of the allowed form.

Suppose $k\le n$ and 
$k\le j\le n$. Then 
if $\varepsilon_j=0$ we have
\begin{align*}
Hy_nx_n^{\varepsilon_n}\dots
	y_kx_k^{\varepsilon_k}\cdot y_j &= Hy_nx_n^{\varepsilon_n}\dots
	y_jx_j^{0}y_{j-1}y_j
	x_{j-1}^{\varepsilon_{j-1}}y_{j-2}x^{\varepsilon_{j-2}}\dots
	y_kx_k^{\varepsilon_k}\\
&=Hy_nx_n^{\varepsilon_n}\dots
	y_jy_{j-1}y_j
	x_{j-1}^{\varepsilon_{j-1}}y_{j-2}x^{\varepsilon_{j-2}}\dots
	y_kx_k^{\varepsilon_k}\\
	&= Hy_nx_n^{\varepsilon_n}\dots
	y_j{\bf{x_{j-1}}}y_{j-1}y_j
	x_{j-1}^{\varepsilon_{j-1}}y_{j-2}x^{\varepsilon_{j-2}}\dots
	y_kx_k^{\varepsilon_k}\\
&= Hy_nx_n^{\varepsilon_n}\dots
	{\bf y_j{{x_{j-1}}}y_{j-1}y_j}
	x_{j-1}^{\varepsilon_{j-1}}y_{j-2}x^{\varepsilon_{j-2}}\dots
	y_kx_k^{\varepsilon_k}\\
&= Hy_nx_n^{\varepsilon_n}\dots
{\bf y_{j-1}x_{j-1}y_jy_{j-1}x_{j-1}}
	x_{j-1}^{\varepsilon_{j-1}}y_{j-2}x^{\varepsilon_{j-2}}\dots
	y_kx_k^{\varepsilon_k}\\
&= Hy_nx_n^{\varepsilon_n}\dots
y_jy_{j-1}
	x_{j-1}^{\varepsilon_{j-1}+1}y_{j-2}x^{\varepsilon_{j-2}}\dots
	y_kx_k^{\varepsilon_k}.
\end{align*}
If $\varepsilon_j=1$ then we get
\begin{align*}
	Hy_nx_n^{\varepsilon_n}\dots
	y_kx_k^{\varepsilon_k}\cdot y_j &= Hy_nx_n^{\varepsilon_n}\dots
	y_jx_jy_{j-1}y_j
	x_{j-1}^{\varepsilon_{j-1}}y_{j-2}x^{\varepsilon_{j-2}}\dots
	y_kx_k^{\varepsilon_k}\\
	&= Hy_nx_n^{\varepsilon_n}\dots
	{\bf y_j x_jy_{j-1}y_j}
	x_{j-1}^{\varepsilon_{j-1}}y_{j-2}x^{\varepsilon_{j-2}}\dots
	y_kx_k^{\varepsilon_k}\\
&= Hy_nx_n^{\varepsilon_n}\dots
{\bf y_{j-1}x_jy_jy_{j-1}x_j}
	x_{j-1}^{\varepsilon_{j-1}}y_{j-2}x^{\varepsilon_{j-2}}\dots
	y_kx_k^{\varepsilon_k}\\
&= Hy_nx_n^{\varepsilon_n}\dots
{\bf y_jx_jy_{j-1}}
	x_{j-1}^{\varepsilon_{j-1}}y_{j-2}x^{\varepsilon_{j-2}}\dots
	y_kx_k^{\varepsilon_k}\\
&= Hy_nx_n^{\varepsilon_n}\dots
	y_jx_jy_{j-1}y_j
	x_{j-1}^{\varepsilon_{j-1}}y_{j-2}x^{\varepsilon_{j-2}}\dots
	y_kx_k^{\varepsilon_k}.
\end{align*}
Now we consider the action of $x_{k-1}$ on $Hy_nx_n^{\varepsilon_n}\dots
	y_kx_k^{\varepsilon_k}$. We prove by induction on $k$ starting with
	$k=n$ that 
	$$
	Hy_nx_n^{\varepsilon_n}\dots
	y_kx_k^{\varepsilon_k}\cdot x_{k-1} = Hy_nx_n^{\varepsilon_n}\dots
	y_kx_k^{\varepsilon_k}.
	$$
	For $k=n$ we have
	\begin{align*}
		Hy_n\cdot x_{n-1} &= Hx_{n-1} y_n = Hy_n\\
		Hy_nx_n\cdot x_{n-1}&=  H x_{n-1} x_ny_nx_{n-1}x_n =
		Hy_nx_{n-1}x_n = Hy_nx_n.
	\end{align*}
	Suppose $k\le n-1$ and $\varepsilon_{k}=0$. Then, since $x_{k-1}$
	commutes with all $y_s$, $s\ge k$ and all $x_s$, $s\ge k+1$ we get
$$
	Hy_nx_n^{\varepsilon_n}\dots
	y_kx_k^{\varepsilon_k}\cdot x_{k-1} = Hy_nx_n^{\varepsilon_n}\dots
	y_kx_k^{\varepsilon_k}.
	$$
	Suppose that $\varepsilon_k=1$. Then we get
	\begin{align*}
		Hy_nx_n^{\varepsilon_n}\dots
	y_kx_k\cdot x_{k-1} &= 	Hy_nx_n^{\varepsilon_n}\dots
	y_{k+1}x_{k+1}^{\varepsilon_{k+1}} {\bf x_{k-1}x_k y_k x_{k-1}x_ky_k} \\
&= 	Hy_nx_n^{\varepsilon_n}\dots
	y_{k+1}x_{k+1}^{\varepsilon_{k+1}} {\bf x_k y_k x_{k-1}x_ky_k}.
	\end{align*}
	Now by the induction hypothesis we have
	$$
		Hy_nx_n^{\varepsilon_n}\dots
		y_{k+1}x_{k+1}^{\varepsilon_{k+1}}\cdot x_{k} = Hy_nx_n^{\varepsilon_n}\dots
		y_{k+1}x_{k+1}^{\varepsilon_{k+1}}.
	$$
	Therefore
	\begin{align*}
			Hy_nx_n^{\varepsilon_n}\dots
	y_kx_k\cdot x_{k-1}  &= 	Hy_nx_n^{\varepsilon_n}\dots
	y_{k+1}x_{k+1}^{\varepsilon_{k+1}} {\bf  y_k x_{k-1}x_ky_k} \\
	&= 	Hy_nx_n^{\varepsilon_n}\dots
	y_{k+1}x_{k+1}^{\varepsilon_{k+1}} {\bf  y_k x_ky_k}\\
		&= 	Hy_nx_n^{\varepsilon_n}\dots
	y_{k+1}x_{k+1}^{\varepsilon_{k+1}}   y_k x_k\cdot y_k\\
	\end{align*}
	and we can use the computation for the action of $y_k$ on $Hy_nx_n^{\varepsilon_n}\dots
	y_kx_k^{\varepsilon_k}$.

	If $j=k$ then we get
$$
	Hy_nx_n^{\varepsilon_n}\dots
	y_kx_k^{\varepsilon_k}\cdot x_{k} = Hy_nx_n^{\varepsilon_n}\dots
	y_kx_k^{\varepsilon_k+1},
	$$
and this is a coset of allowed form. Suppose now that $k+1\le j\le n$. If
$\varepsilon_{j-1}=0$ then we get 
$$
	Hy_nx_n^{\varepsilon_n}\dots
	y_kx_k^{\varepsilon_k}\cdot x_{j} = Hy_nx_n^{\varepsilon_n}\dots
	y_jx_j^{\varepsilon_j}x_j y_{j-1}
	y_{j-2}x^{\varepsilon_{j-2}}\dots
	y_kx_k^{\varepsilon_k},$$
	which is also a coset of allowed form. Now assume that $\varepsilon_{j-1}=1$. Then
	we have
	\begin{align*}
		Hy_nx_n^{\varepsilon_n}\dots
	y_kx_k^{\varepsilon_k}\cdot x_{j} & = Hy_nx_n^{\varepsilon_n}\dots
	y_jx_j^{\varepsilon_j} y_{j-1}x_{j-1} x_j
	y_{j-2}x^{\varepsilon_{j-2}}\dots
	y_kx_k^{\varepsilon_k}\\  &=	
 Hy_nx_n^{\varepsilon_n}\dots
 y_jx_j^{\varepsilon_j} {\bf x_jx_{j-1}y_{j-1} x_jx_{j-1} y_{j-1}}
	y_{j-2}x^{\varepsilon_{j-2}}\dots
	y_kx_k^{\varepsilon_k} .
	\end{align*}
	From the previous computation we already know that
$$
	Hy_nx_n^{\varepsilon_n}\dots
	y_jx_j^{\varepsilon_{j+1}}\cdot x_{j-1} = Hy_nx_n^{\varepsilon_n}\dots
	y_jx_j^{\varepsilon_{j+1}}.
	$$
	Therefore by switching $x_{j}$ and $y_{j-1}$ we get
$$
Hy_nx_n^{\varepsilon_n}\dots
	y_kx_k^{\varepsilon_k}\cdot x_{j} = Hy_nx_n^{\varepsilon_n}\dots
 y_j{\bf x_j^{\varepsilon_j+2}  y_{j-1} x_{j-1} y_{j-1}}
	y_{j-2}x^{\varepsilon_{j-2}}\dots
	y_kx_k^{\varepsilon_k}.
$$
Using the relation $y_{j-1}x_{j-1}y_{j-1} = x_{j-1}y_{j-1}x_{j-1}$ we finally
get
$$
Hy_nx_n^{\varepsilon_n}\dots
	y_kx_k^{\varepsilon_k}\cdot x_{j} = Hy_nx_n^{\varepsilon_n}\dots
 y_j{ x_j^{\varepsilon_j}   y_{j-1} x_{j-1}}
	y_{j-2}x^{\varepsilon_{j-2}}\dots
	y_kx_k^{\varepsilon_k}.
$$
This completes the proof of the proposition.
\end{proof}
\begin{proposition}
	Suppose $\lambda_l=1$. Denote the group $\left\langle
	A_{2,n}(S_{\lambda|\mu'})
	\right\rangle$ by $H$. Then the cosets of $H$ in $\left\langle
	A_{2,n}(S_{\lambda|\mu})
	\right\rangle$ are given by the set 
	$$
	\left\{ Hy_ny_{n-1}\dots y_l y_{j_1}\dots y_{j_s}\ \middle|\  n-\mu_m+2\le l
	<j_1<j-2<\dots<j_s \le n\right\}.
	$$
	Note that the number of elements in this set is less then $2^{\mu_m-1}$.
\end{proposition}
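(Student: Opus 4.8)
My plan is to run the coset‑closure argument of the preceding proposition in the pure‑$y$ situation forced by $\lambda_l=1$. Write $G=\langle A_{2,n}(S_{\lambda|\mu})\rangle$ and recall $H=\langle A_{2,n}(S_{\lambda|\mu'})\rangle$. The first thing I would record is structural. Since $\lambda_l=1$ we have $\lambda_1+\dots+\lambda_{l-1}=n$, so $n\in\overline{s}(\lambda)$ and hence $x_n\notin S_{\lambda|\mu}$; and since $\overline{s}(\mu')=\overline{s}(\mu)\cup\{n\}$ we get $S_{\lambda|\mu'}=S_{\lambda|\mu}\setminus\{y_n\}$. Thus $H$ contains every generator of $G$ except $y_n$; in particular every $x_i\in S_{\lambda|\mu}$ lies in $H$ and every $y_i\in S_{\lambda|\mu}$ with $i\le n-1$ lies in $H$, and $G=\langle H,y_n\rangle$ with $y_n^2=e$. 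It therefore suffices to show that the set $T$ consisting of $H$ together with the listed cosets is closed under right multiplication by every generator $z\in S_{\lambda|\mu}$: as $H=He\in T$ and $S_{\lambda|\mu}$ generates $G$, closure forces $T$ to contain every right coset, whence $[G:H]\le|T|$. Counting, for each bottom index $l$ with $n-\mu_m+2\le l\le n$ the admissible ascending tails are the subsets of $\{l+1,\dots,n\}$, so
\[
|T|\le 1+\sum_{l=n-\mu_m+2}^{n}2^{\,n-l}=1+(2^{\mu_m-1}-1)=2^{\mu_m-1},
\]
which is exactly the index computed in Corollary~\ref{order2}; this is the bound needed for the order comparison, and a posteriori (once $\phi_{\lambda,\mu}$ is an isomorphism) it forces equality, so the listed cosets are distinct.

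To verify closure I would split the generators into three kinds relative to a fixed representative $w=y_ny_{n-1}\cdots y_l\,y_{j_1}\cdots y_{j_s}$, whose letters have indices exactly $\{l,\dots,n\}$. A generator $z$ of index at most $l-2$ commutes past every letter of $w$, and since $z\in H$ we get $Hwz=Hzw=Hw$. The distinguished generator $y_n$ is equally direct: if $n\in\{j_1,\dots,j_s\}$ then $j_s=n$ and $wy_n$ cancels the final letter, shortening the tail; otherwise $wy_n=y_ny_{n-1}\cdots y_l\,y_{j_1}\cdots y_{j_s}y_n$ is already a representative of the listed form, with $n$ adjoined to the tail. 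In either case $Hwy_n\in T$.

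The substance is the third kind: the generators $y_j$ and $x_j$ of index in $\{l-1,\dots,n-1\}$, which genuinely interact with $w$. For these I would transport the incoming letter leftwards through $w$ using the commutations $f(z,t)=2$, the dihedral relation $(y_jy_{j+1})^4=e$, the braid relation $x_jy_jx_j=y_jx_jy_j$ from $f(x_j,y_j)=3$, and—decisively—the facet relations $(x_jy_jy_{j+1})^3=e$ and $(x_jy_{j-1}y_j)^3=e$ from $g(x_j,y_j,y_{j+1})=g(x_j,y_{j-1},y_j)=3$. The facet relations are indispensable: since $x_j,y_j\in H$, manipulations using only the braid and commutation relations merely cycle and never change the coset, whereas the $g=3$ facets bring in the neighbouring letter $y_{j\pm1}$ needed to pass to a different representative. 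A typical instance is $Hy_ny_{n-1}\cdots y_l\,x_{n-1}$: commuting $x_{n-1}$ left to the descending $y_{n-1}$ and using $y_ny_{n-1}x_{n-1}=(x_{n-1}y_{n-1}y_n)^{-1}=(x_{n-1}y_{n-1}y_n)^2$, then absorbing the emerging $x_{n-1}$ and $y_{n-1}\in H$ on the left, the product is rewritten as $Hy_ny_{n-1}\cdots y_l\,y_n$, i.e. $w$ with $y_n$ adjoined to its (here empty) tail. The remaining sub‑cases—whether $y_j$ also occurs in the tail, and the positions of $j$ relative to the current bottom $l$—are handled in the same way.

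The main obstacle is precisely this last step. Everything else is immediate, but organizing the facet‑driven rewriting so that, for every admissible shape $(l;j_1,\dots,j_s)$ and every interacting generator, the product $Hwz$ is identified with one of the listed cosets requires the same careful, if mechanical, bookkeeping as in the preceding proposition: one must track how the descending block and the ascending tail are modified and check that no $x$‑letters survive. Once this is in place the closure of $T$, and hence the index bound $[G:H]\le 2^{\mu_m-1}$, follows.
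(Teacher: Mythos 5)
Your framework is the same as the paper's: observe that $S_{\lambda|\mu'}=S_{\lambda|\mu}\setminus\{y_n\}$, so every generator of $\left\langle A_{2,n}(S_{\lambda|\mu})\right\rangle$ except $y_n$ already lies in $H$; count the listed cosets against the index $2^{\mu_m-1}$ from Corollary~\ref{order2}; and reduce everything to showing that the listed set is closed under right multiplication by the generators. Your structural observations, the count, the treatment of low-index generators and of $y_n$, and your one worked instance ($Hy_n\cdots y_l\cdot x_{n-1}=Hy_n\cdots y_l\,y_n$, which is the paper's identity $Hw_{n,i}x_i=Hw_{n,i}y_{i+1}$ at $i=n-1$) are all correct.

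The problem is that the closure verification \emph{is} the proof, and you have only carried out its easiest piece. The cases you defer as ``mechanical bookkeeping'' are exactly where the work lies, and they are not routine repetitions of your sample computation. Concretely: (i) when $x_i$ acts and $y_i$ occurs in the ascending tail (the case $i=j_t$), one must isolate a maximal consecutive sub-block $\overline{w}_{m,i}$ of the tail and prove $Hw_{n,m-1}\overline{w}_{m,i}\,x_i=Hw_{n,m-1}\overline{w}_{m,i}$, which in the paper is a seven-step rewriting interleaving both facet relations with the commutations and the absorption of $x_i,y_{i-1}\in H$ on the left; (ii) when $y_i$ acts and $y_{i+1}$ occurs in the tail, one needs $Hw_{n,i-1}y_{i+1}y_i=Hw_{n,i-1}y_{i+1}$, whose proof uses the relation $(y_{i-1}y_iy_{i+1})^4=e$ --- a relation absent from your inventory of tools, which indicates these cases were not actually attempted. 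Without these verifications one cannot conclude that the set is closed under all generators, hence cannot bound the index; as written, the proposal is a correct outline of the paper's strategy rather than a proof.
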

\begin{proof}
		It is enough to show that the given set of cosets is closed under the
	action of generators $z\in S(\lambda,\mu)$.
	We denote the products $y_m\dots y_{k}$ by $w_{m,k}$. Let us fix a
	sequence $k+1\le j_1<\dots <j_s\le n$ and denote the product $y_{j_1}\dots
	y_{j_s}$ by $v$. We start by computing $Hw_{n,k}v\cdot x_i$. If
	$i<k$ then $vx_i=x_iv$ and $w_{n,k}x_i=x_iw_{n,k}$. Therefore
	$Hw_{n,k}vx_i= Hw_{n,k}v$. 

	Suppose $i\ge k$ and $i\not\in \left\{ j_1,\dots,j_s \right\}$.
	Then $vx_i=x_iv$. We claim that $Hw_{n,k}x_i=Hw_{n,k}y_{i+1}$. Then we
	will get that $y_{i+1}v$ is of allowed form since $y_i$ does not occur
	in $v$ and therefore $y_{i+1}$ can be brought in the suitable place.

	To show that $Hw_{n,k}x_i=Hw_{n,k}y_{i+1}$ we write $w_{n,k}$ as a
	product $w_{n,i}w_{i-1,k}$. Since $w_{i-1,k}x_i=x_iw_{i-1,k}$ and
	$y_{i+1}w_{i-1,k} = w_{i-1,k}y_{i+1}$ it is enough to show that
	$Hw_{n,i}x_i = Hw_{n,i} y_{i+1}$. 
	Note that $i<n$ as $\lambda_l=1$ and therefore
	$n=\lambda_1+\dots+\lambda_{l-1}\in \overline{s}(\lambda)$.
	Now we have
	\begin{align*}
		Hw_{n,i}x_i = Hw_{n,i+2}y_{i+1}y_i x_i = H w_{n,i+2}
		x_iy_iy_{i+1}x_iy_iy_{i+1}.
	\end{align*}
	Since $x_i$ and $y_i$ commute with $w_{n,i+2}$, and $x_i$ commutes with
	$y_{i+1}$ we get
	\begin{align*}
		Hw_{n,i}x_i = Hw_{n,i+2}y_{i+1}y_i x_i = H w_{n,i+2}
		y_{i+1}y_iy_{i+1} = Hw_{n,i}y_{i+1}
	\end{align*}
	as claimed.

	Now consider the case $i=j_t$ for some $1\le t\le s$. Let $i\ge m\ge k$
	be such that $m-1\not\in \left\{ j_1,\dots j_s \right\}$ but $\left\{
	m,m+1,\dots,j_t \right\}\subset \left\{ j_1,\dots,j_s \right\}$. Then we
	can write $v$ as a product $v'(w_{m,i})^{-1} v''$. We denote
	$(w_{m,i})^{-1}$ by $\overline{w}_{m,i}$. Note that $v'$ commutes with
	$\overline{w}_{m,i}$. Therefore we get
	$$
	Hw_{n,k} v\cdot x_i = Hw_{n,m-1} \overline{w}_{m,i} x_i w_{m-2,k} v'v''.
	$$
	We claim that $Hw_{n,m-1} \overline{w}_{m,i} x_i= Hw_{n,m-1}
	\overline{w}_{m,i}$. This will imply that $Hw_{n,k} v\cdot x_i=
	Hw_{n,k}v $. 

 First we consider the case $i=m$. In this case $n\ge m+1$ since $x_n\not\in
 S_{\lambda|\mu}$.  We have
 \begin{align*}
	 Hw_{n,m-1}\overline{w}_{m,i}\cdot x_i & = Hw_{n,m-1}y_m \cdot x_m =
	 Hw_{n,m+1}y_my_{m-1}y_mx_m \\
	 &= Hw_{n,m+1} y_m x_m y_m y_{m-1} x_m y_m y_{m-1} \\
	 &= Hw_{n,m+1} y_m x_m y_m x_m y_{m-1} y_m y_{m-1} \\
	 & = Hw_{n,m+1} x_m y_m y_{m-1} y_m y_{m-1} \\
	 & = Hx_m w_{n,m+1} y_{m-1} y_m y_{m-1} y_m \\
	 & = Hy_{m-1} w_{n,m+1} y_m y_{m-1} y_m = H w_{n,m-1} y_m.
 \end{align*}
 Now suppose that $i>m$. Then we have $\overline{w}_{m,i}=
 \overline{w}_{m,i-2}y_{i-1}y_i$, where $\overline{w}_{m,i-2}=e$ if $i=m+1$. 
 Therefore
 \begin{align*}
	 Hw_{n,m-1}\overline{w}_{m,i}\cdot x_i &  =
	 Hw_{n,m-1}\overline{w}_{m,i-2}y_{i-1} y_i x_i \\
	 &= Hw_{n,m-1} \overline{w}_{m,i-2} x_i y_i y_{i-1} x_i y_i y_{i-1} \\
	 &= H  w_{n,i+1} y_i y_{i-1} w_{i-2,m} \overline{w}_{m,i-2} x_i y_i y_{i-1} x_i y_i
	 y_{i-1} \\
	 &= H w_{n,i+1}  y_i y_{i-1} x_i y_i w_{i-2,m} \overline{w}_{m,i-2} y_{i-1} x_i y_i
\\&=
H w_{n,i+1} x_i y_{i-1} y_i x_i y_{i-1} w_{i-2,m} \overline{w}_{m,i-2} y_{i-1} x_i y_i
\\ & = H x_i y_{i-1} w_{n,i+1} y_i x_i w_{i-1,m} \overline{w}_{m,i-1} x_i y_i \\
&= Hw_{n,i} w_{i-1,m}\overline{w}_{m,i-1} x_i^2 y_i = Hw_{n,m}
\overline{w}_{m,i}.
 \end{align*}

 Now we consider the action of $y$'s on the given set of cosets. 
	Suppose $i\le k-2$ then $y_i$ commutes with both $w_{n,k}$ and
	$v$, and therefore $Hw_{n,k} v \cdot y_i = Hw_{n,k}v$.  Note that
	$y_{k-1}$ commutes with $v$ as neither $y_k$ nor $y_{k-2}$ occur in
	$v$. Therefore $Hw_{n,k}v\cdot y_{k-1} = Hw_{n,k}y_{k-1} v =
	Hw_{n,k-1}v$.  

	Suppose now that $k\le i\le n$. If $i+1$ is not an element of 
	$\{j_1,\dots,j_s\}$ then we can
	bring $y_i$ inside $v$ to the appropriate place and there is nothing
	to prove. If $i+1=j_{t}$ for some $t$ we consider two cases:
	$i=j_{t-1}$ and $i\not=j_{t-1}$. 

	If $j_{t-1}\not=i$ we can write $v$ as a product $v'y_{i+1}v''$, where
	$v'$ commutes with $y_{i+1}$ and $y_i$, and where $v''$ commutes with
	$y_i$. Therefore
	$$
	Hw_{n,k} v\cdot y_i = Hw_{n,k} y_{i+1} y_i v'v''.
	$$
	Now we have $w_{n,k} = w_{n,i-1} w_{i-2,k}$ and
	$y_{i+1}y_i$ commutes with $w_{i-2,k}$. We claim that
	$$
	Hw_{n,i-1} y_{i+1}y_i = Hw_{n,i-1} y_{i+1}.
	$$
	Then by backward substitution we will get that $Hw_{n,k} v \cdot y_i =
	Hw_{n,k} v$ in this case. 
We have
\begin{align*}
	Hw_{n,i-1}y_{i+1} y_i & = Hw_{n,i+2} y_{i+1} y_i y_{i-1}y_{i+1}y_i \\
	&= Hw_{n,i+2} y_{i-1} y_i y_{i+1} y_{i-1} y_i y_{i+1} y_{i-1}\\
	& = Hw_{n,i+2} y_{i+1} y_i y_{i-1} y_{i+1} = Hw_{n,i-1} y_{i+1}.
\end{align*}
Assume now that $j_{t-1}=i$. We can write $v$ as a product $v'y_iy_{i+1}v''$,
where $v''$ commutes with $y_i$. Then $Hw_{n,k} v y_i=
Hw_{n,k}v'y_iy_{i+1}y_iv''$. Now replace $y_iy_{i+1}y_i$ by
$y_{i+1}y_iy_{i+1}y_iy_{i+1}$. As $v'$ commutes with $y_{i+1}y_i$	
and since $Hw_{n,k} y_{i+1}y_i = Hw_{n,k}y_{i+1}$ we get
\begin{align*}
	Hw_{n,k}v'y_{i+1}y_iy_{i+1}y_iy_{i+1} v'' &=
	Hw_{n,k}y_{i+1}y_iv'y_{i+1}y_{i} y_{i+1} v''\\
	& = Hw_{n,k}y_{i+1} v' y_{i+1}y_i y_{i+1} v''\\
	& = Hw_{n,k}v'y_iy_{i+1} v'' = Hw_{n,k} v.
\end{align*}

\end{proof}
\bibliography{generators}

\def\cprime{$'$}
\providecommand{\bysame}{\leavevmode\hbox to3em{\hrulefill}\thinspace}
\providecommand{\MR}{\relax\ifhmode\unskip\space\fi MR }
\providecommand{\MRhref}[2]{%
  \href{http://www.ams.org/mathscinet-getitem?mr=#1}{#2}
}
\providecommand{\href}[2]{#2}
\begin{thebibliography}{1}

\bibitem{phan}
C.~D. Bennett, R.~Gramlich, C.~Hoffman, and S.~Shpectorov,
  \emph{Curtis-{P}han-{T}its theory}, Groups, combinatorics \& geometry
  ({D}urham, 2001), World Sci. Publ., River Edge, NJ, 2003, pp.~13--29.
  \MR{MR1993197 (2004g:20021)}

\bibitem{biss}
Daniel~K. Biss, \emph{A presentation for the unipotent group over {$\bold
  F_2$}}, Comm. Algebra \textbf{26} (1998), no.~9, 2971--2975. \MR{MR1635890
  (99g:20091)}

\bibitem{magma}
Wieb Bosma, John Cannon, and Catherine Playoust, \emph{The {M}agma algebra
  system. {I}. {T}he user language}, J. Symbolic Comput. \textbf{24} (1997),
  no.~3-4, 235--265, Computational algebra and number theory (London, 1993).
  \MR{MR1484478}

\bibitem{tits}
N.~Bourbaki, \emph{\'{E}l\'ements de math\'ematique. {F}asc. {XXXIV}. {G}roupes
  et alg\`ebres de {L}ie. {C}hapitre {IV}: {G}roupes de {C}oxeter et syst\`emes
  de {T}its. {C}hapitre {V}: {G}roupes engendr\'es par des r\'eflexions.
  {C}hapitre {VI}: syst\`emes de racines}, Actualit\'es Scientifiques et
  Industrielles, No. 1337, Hermann, Paris, 1968. \MR{MR0240238 (39 \#1590)}

\bibitem{okunkov}
A.~M. Vershik and A.~Yu. Okun{\cprime}kov, \emph{A new approach to
  representation theory of symmetric groups. {II}}, Zap. Nauchn. Sem.
  S.-Peterburg. Otdel. Mat. Inst. Steklov. (POMI) \textbf{307} (2004),
  no.~Teor. Predst. Din. Sist. Komb. i Algoritm. Metody. 10, 57--98, 281.
  \MR{MR2050688 (2005c:20024)}

\bibitem{woodcock}
David Woodcock, \emph{Schur algebras, combinatorics, and cohomology}, Ph.D.
  thesis, University of Warwick, 1991.

\end{thebibliography}
\bibliographystyle{amsplain}

\end{document}